\newtheorem{Theorem}{Theorem}[section]
\newtheorem{Lemma}{Lemma}[section]
\def \R{\mathbb{R}}
\def \E{\mathbb{E}}
\newcommand{\eps}{\epsilon}
\newcommand{\wrt}{with respect to }
\def \Uc{{\cal U}}
\def\beqs{\begin{eqnarray*}}
\def\enqs{\end{eqnarray*}}
\def\beq{\begin{eqnarray}}
\def\enq{\end{eqnarray}}
\author{Omar Kebiri{\thanks{Laboratory of Statistics and Random Modeling, University of Abou Bekr Belkaid, Tlemcen, Algeria} \footnotemark[2]} \and Lara Neureither\thanks{Institute of Mathematics, Brandenburgische Technische Universität Cottbus-Senftenberg, Cottbus, Germany} \and Carsten Hartmann\footnotemark[2]}
\begin{document}

\title{Singularly perturbed forward-backward stochastic differential equations: application to the optimal control of bilinear systems}

\maketitle
\noindent\textbf{Abstract}:
We study linear-quadratic stochastic optimal control problems with bilinear state dependence for which the underlying stochastic differential equation (SDE) consists of slow and fast degrees of freedom. We show that, in the same way in which the underlying dynamics can be well approximated by a reduced order effective dynamics in the time scale limit (using classical homogenziation results),  the associated optimal expected cost  converges in the time scale limit to an effective optimal cost. This  entails that we can well approximate the stochastic optimal control for the whole system by the reduced order stochastic optimal control, which is clearly easier to solve because of lower dimensionality. The approach uses an equivalent formulation of the Hamilton-Jacobi-Bellman (HJB) equation, in terms of forward-backward SDEs (FBSDEs). We exploit the efficient solvability of FBSDEs via a least squares Monte Carlo algorithm and show its applicability by a suitable numerical example.\\

\noindent\textbf{Keywords}: {Linear quadratic stochastic control, bilinear systems, slow-fast dynamics, model reduction, forward-backward stochastic differential equations, least squares Monte Carlo.}

\section{Introduction}

Stochastic optimal control is one of the important fields in mathematics which has attracted the attention of both pure and applied mathematicians \cite{RS94,FM06}. Stochastic control problems also appear in a variety of applications, such as statistics \cite{WangDupuis2004,is_multiscale}, 
financial mathematics \cite{DN90,P09}, molecular dynamics \cite{SWH12,HS12} or materials science \cite{Steinbrecher2010,Asplund2011}, to mention just a few. 
For some applictions in science and engineering, such as molecular dynamics \cite{SWH12,ZWHWS14}, the high dimensionality of the state space is an important aspect when solving optimal control problems. Another issue when solving optimal control problems by discretising the corresponding dynamic programming equations in space and time are multiscale effects that come into play when the state space dynamics exhibit slow and fast motions.

Here we consider such systems that have slow and fast scales and that are possibly high-dimensional. Several techniques have been developed to reduce the spatial dimension of control systems (see e.g.~\cite{Antbook,Baur2014} and the references therein), but these techniques treat the control as a possibly time-dependent parameter (``open loop control'') and do not take into account that the control may be a feedback control that depends on the state variables (``closed loop control''). 
Clearly, homogenization techniques for  stochastic control systems have been extensively studied by applied 
analysts using a variety of different mathematical tools, including viscosity solutions of the Hamilton-Jacobi-Bellman equation~\cite{Blankenship1987,Evans1989}, backward stochastic differential equations \cite{Buckdahn1998,Ichihara2005}, or occupation measures \cite{kushner1990,Kurtz2001}. The convergence analysis of multiscale stochastic control systems is quite involved and non-constructive, in that the limiting equations of motion are not 
given in explicit or closed form; see \cite{Kokotovic1984,Kabanov2003} for notable exceptions, dealing mainly with the case when the dynamics is linear.

In general, the elimination of variables and solving control problems do not commute, so one of the key questions in control engineering is under which conditions it is possible to eliminate variables before solving an optimal problem. We call this the \emph{model reduction problem}. In this paper we identify a class of stochastic feedback control problems with bilinear state dependence that have the property that they admit the elimination of variables (i.e. model reduction) before solving the control problem. These systems tuern oput to be relevant in the control of high-dimensional transport PDEs, such as Fokker-Planck equations or the evolution equations of open quantum systems \cite{HSZ13,bilinear}. Our approach is based on a Donsker-Varadhan type duality principle between a linear Feynman-Kac PDE and the semi-linear dynamic programming PDE associated with a stochastic control problem \cite{ZLPH14}. Here we exploit the fact that the dynamic programming PDE can be recast as an uncoupled forward backward stochastic differential equation (see e.g. \cite{peng93,to}) that can be treated by model reduction techniques, such as averaging or homogenisation.          

The relation between semilinear PDEs of Hamilton-Jacobi-Bellman type and forward-backward stochastic differential equations (FBSDE) is a classical subject that has been first studied by  Pardoux and Peng \cite{PP90} and since then received lot of attention from various sides, e.g.\cite{BKKM17,DE92,EPQ97,HIM05,HP97,Kobylanski2000}. The  solution theory  has its roots in the work of Antonelli \cite{A93} and since then has been extended in various directions; see e.g. \cite{bgm,BKM,Z05,MPY94}.

From a theoretical point of view, this paper goes beyond our previous works \cite{ZLPH14,HEtal17} in that we prove strong  convergence of the value function and the control without relying on compactness or periodicity assumptions for the fast variables, even though we focus on bilinear systems only, which is the weakest form of nonlinearity. (Many nonlinear systems however can be represented as bilinear systems by a so-called Carleman linearisation.) It also goes beyond the classical works  \cite{Kokotovic1984,Kabanov2003}  that treat systems that are either fully linear or linear in the fast variables. We stress that we are mainly aiming at the model reduction problem, but we discuss alongside with the theoretical results some ideas to discretise the corresponding FBSDE \cite{B97,C97,BET09,BenderSteiner2012,HNM17}, since one of the main motivations for doing model reduction is to reduce the \emph{numerical} complexity of solving optimal control problems.   

\subsection{Set-up and problem statement}

We briefly discuss the technical set-up of the control problem considered in this paper.  
In this paper, we consider the linear-quadratic (LQ) stochastic control problem of the following form: minimize the expected cost
\begin{equation}\label{jofu}
J(u;t,x) = \E\!\left[\int_t^\tau  \left(q_0(X_s^{u}) + |u_s|^2\right)ds + q_1(X_\tau^{u})\,\Big|\,X^{u}_t=x\right]
\end{equation}
over all admissible controls $u\in\Uc$ and subject to:
\begin{equation}\label{sde}
dX_s^{u} = \left(a(X^{u}_s) + b(X_s^{u})u_s\right)ds + \sigma(X_s^u)dW_s\,,\quad 0\leqslant t\leqslant s\leqslant \tau\,.
\end{equation}
Here $\tau<\infty$ is a bounded stopping time (specified below), and the set of admissible controls $\Uc$ is chosen such that (\ref{sde}) has a unique strong solution. The denomination \emph{linear-quadratic} for (\ref{jofu})--(\ref{sde}) is due to the specific dependence of the system on the control variable $u$.
The state vector $x\in\R^n$ is assumed to be high-dimensional, which is why we seek a low-dimensional approximation of (\ref{jofu})--(\ref{sde}).

Specifically, we consider the case that $q_0$ and $q_1$ are quadratic in $x$, $a$ is linear and $\sigma$ is constant, and the control term is an affine function of $x$, i.e.,
\[
b(x)u = \left(N x + B\right)u
\]
In this case the system is called \emph{bilinear} (including linear systems as a special case), and the aim is to replace (\ref{sde}) by a lower dimensional bilinear system
\begin{equation*}
d\bar{X}_s^{v} = \bar{A}\bar{X}^{v}_s\,ds + \left(\bar{N} \bar{X}_s^{v} + \bar{B}\right)v_{s}\,ds + \bar{C}dW_s\,,\quad 0\leqslant t\leqslant s\leqslant \tau\,,
\end{equation*}
with states $\bar{x}\in\R^{n_s}$, $n_s\ll n$ and an associated reduced cost functional
\begin{equation*}
\bar{J}(v;\bar{x},t) = \E\!\left[\int_t^\tau \left(\bar{q}_0(\bar{X}_s^{v}) + |v_s|^2\right)ds + \bar{q}_1(\bar{X}_\tau^{v})\,\Big|\,\bar{X}^{v}_t=\bar{x}\right]\,,
\end{equation*}
that is solved instead of (\ref{jofu})--(\ref{sde}). Letting $v^*$ denote the minimizer of $\bar{J}$, we require that $v^*$ is a good approximation of the minimizer $u^*$ of the original problem where ''good approximation'' is understood in the sense that
\begin{equation*}
J(v^*;\cdot,t=0) \approx  J(u^*;\cdot,t=0)\,.
\end{equation*}
In the last equation, closeness must be suitably interpreted, e.g.~uniformly on all compact subsets of $\R^n\times[0,T)$ for some $T<\infty$.

One situation in which the above approximation property holds is when  $u^* \approx v^*$ uniformly in $t$ and the cost is continuous in the control, but it turns out that this requirement will be too strong in general and overly restrictive. We will discuss alternative criteria in the course of this paper.

\subsection{Outline}

The paper is organised as follows: In Section \ref{sec:soc} we introduce the bilinear stochastic control problem studied in this paper and  derive the corresponding forward-backward stochastic differential equation (FBSDE). Section \ref{sec:mor} contains the main result, a convergence result for the value function of a singularly perturbed control problem with bilinear state dependence, based on an FBSDE formulation. In Section 4 we present a numerical example to illustrate the theoretical findings and discuss the numerical discretization of the FBSDE. The article concludes in Section \ref{sec:out} with a short summary and a discussion of future work. The proof of the main result and some technical lemmas are recorded in the Appendix.

\section{Singularly perturbed bilinear control systems}\label{sec:soc}

We now specify the system dynamics (\ref{sde}) and the corresponding cost functional (\ref{jofu}). Let $(x_1,x_2)\in\R^{n_s}\times\R^{n_f}$ with $n_s+n_f=n$ denote a decomposition of the state vector $x\in\R^n$ into relevant (slow) and irrelevant (fast) components. Further let $W=(W_t)_{t\ge 0}$ denote $\mathbb{R}^m$-valued Brownian motion on a probability space $(\Omega,\mathcal{F},P)$ that is endowed with the filtration $(\mathcal{F}_t)_{t\ge 0}$ generated by $W$. For any initial condition $x\in\mathbb{R}^{n}$ and any $\mathcal{A}$-valued admissible control $u\in\Uc$, with $\mathcal{A}\subset\R$, we consider the following system of It\^o stochastic differential equations
\begin{equation}
\label{controbideq}
dX^{\epsilon}_s = A X^{\epsilon}_s\,ds + (N X^{\epsilon}_s + B)u_{s}\, ds + C dW_s\,,\; X^{\epsilon}_t = x\,,
\end{equation}
that depends parametrically on a parameter $\eps>0$ via the coefficients
\[A=A^\eps\in\R^{n\times n}\,,\; N=N^\eps\in\R^{n\times n}\,,\; B=B^\eps\in \R^{n}\,,\textrm{ and } C=C^\eps\in\R^{n\times m}\,,
\] where for brevity we also drop the dependence of the process on the control $u$, i.e. $X^\eps_s=X^{u,\eps}_s$.
The stiffness matrix $A$ in (\ref{controbideq}) is assumed to be of the form
\begin{equation}\label{coeffA}
A= \left(\begin{array}{@{}c@{\quad}c}
\begin{array}{ccc}
		A_{11}
		\end{array} & \begin{array}{ccc}
		{\epsilon}^{-1/2}     A_{12}
		\end{array}\\ \\
	{\epsilon}^{-1/2}   A_{21} & {\epsilon}^{-1} A_{22}
	\end{array}\right) \in \R^{(n_s+n_f)\times (n_s+n_f)}\,,
\end{equation}
with $n=n_s+n_f$. Control and noise coefficients are given by
\begin{equation}\label{coeffN}
N= \left(\begin{array}{@{}c@{\quad}c}
\begin{array}{ccc}
N_{11}
\end{array} & \begin{array}{ccc}
  N_{12}
\end{array} \\ \\
{\epsilon}^{-1/2}   N_{21} & {\epsilon}^{-1/2} N_{22}
\end{array}\right)\in \R^{(n_s+n_f)\times (n_s+n_f)}
\end{equation}
and
\begin{equation}\label{coeffBC}
B=\left(
\begin{array}{cc}
B_1 \\
{\epsilon}^{-1/2} B_2 \\
\end{array}
\right)\in\R^{(n_s+n_f)\times 1},\quad C=\left(
\begin{array}{cc}
C_1 \\
{\epsilon}^{-1/2} C_2 \\
\end{array}
\right)\in\R^{(n_s+n_f)\times m}\,,
\end{equation}
where $Nx+B\in{\rm range}(C)$ for all $x\in\R^n$; often we will consider either the case $m=1$ with $C_i=\sqrt{\rho}B_i$, $\rho>0$, or $m=n$, with $C$ being a multiple of the identity when $\eps=1$.
All block matrices $A_{ij}, N_{ij}$, $B_i$ and $C_j$ are assumed to be order 1 and independent of $\eps$.

The above $\eps$-scaling of coefficients is natural for a system with $n_s$ slow and $n_f$ fast degrees of freedom and arises, for example, as a result of a balancing transformation applied to a large-scale system of equations; see e.g.~\cite{H11,HSZ13}.
A special case of (\ref{controbideq}) is the linear system
\begin{equation}
\label{controledeq}
dX^{\epsilon}_s = \left(A X^{\epsilon}_s + B u_{s}\right) ds +  C dW_s\,.
\end{equation}

Our goal is to control the stochastic dynamics (\ref{controbideq})---or (\ref{controledeq}) as a special variant---so that a given cost criterion is optimised. Specifically, given two symmetric positive semidefinite matrices  $Q_0,Q_1\in\R^{n_s\times n_s}$, we consider the quadratic cost functional
\begin{equation}
\label{bertfuctional}
J(u;t,x) = \E\left[\frac{1}{2} \int_t^\tau ((X_{1,s}^{\epsilon})^\top  Q^{}_0 X_{1,s}^{\epsilon}+|u_s|^2)ds+\frac{1}{2}(X_{1,\tau}^{\epsilon})^\top  Q^{}_1 X_{1,\tau}^{\epsilon}\right],
\end{equation}
that we seek to minimize subject to the dynamics (\ref{controbideq}). Here the expectation is understood over all realisations of $(X_s^\eps)_{s\in[t,\tau]}$ starting at $X_t^\eps=x$, and as a consequence $J$ is a function of the initial data $(t,x)$. The stopping time is defined as the minimum of some time $T<\infty$ and the first exit time of a domain $D=D_s\times \R^{n_f}\subset\R^{n_s}\times\R^{n_f}$ where $D_s$ is an open and bounded set with smooth boundary. Specifically, we set $\tau=\min\{\tau_D,T\}$, with
\[
\tau_D = \inf\{s\ge t : X^\eps_s \notin D\}\,.
\]
In other words, $\tau$ is the stopping time that is defined by the event that either $s=T$ or $X^\eps_{s}$ leaves the set $D=D_s\times\R^{n_f}$, whichever comes first.
Note that the cost function does not explicitly depend on the fast variables $x_2$. We define the corresponding value function by
\begin{equation}\label{value}
V^\eps(t,x) = \inf_{u\in\Uc} J(u;t,x)\,.
\end{equation}

\subsubsection*{Remark}
As a consequence of the boundedness of $D_s\subset\R^{n_s}$, we may assume that all coefficients in our control problem are bounded or Lipschitz continuous, which makes some of the proofs in the paper more transparent. 

We further note that all of the following considerations trivially carry over to the case $N=0$ and a multi-dimensional control variable, i.e., $u\in\R^k$ and $B\in\R^{n\times k}$.

\subsection{From stochastic control to forward-backward stochastic differential equations}

We suppose that the matrix pair $(A,C)$ satisfies the Kalman rank condition
\begin{equation}\label{controllablility}
{\rm rank}(C|AC|A^2C|\ldots|A^{n-1}C) = n\,.
\end{equation}
A necessary---and in this case sufficient---condition for optimality of our optimal control problem is that the value function (\ref{value}) solves a semilinear parabolic partial differential equation of Hamilton-Jacobi-Bellman type (a.k.a.~\emph{dynamic programming equation}) \cite{fleming2005}
\begin{equation}\label{hjb}
- \frac{\partial V^\eps}{\partial t} = L^\eps V^\eps + f(x,V^\eps,C^\top \nabla V^\eps)\,, \quad V^\eps|_{E^{+}}= q_1\,,
\end{equation}
where
\[
q_1(x) = \frac{1}{2}x_1^\top Q_1x_1^{}\,
\]
and $E^{+}$
is the terminal set of the augmented process $(s,X_{s}^\eps)$, precisely $E^{+} = \left([0,T)\times\partial D\right)\cup\left(\{T\}\times D\right)$.
Here $L^\eps$ is the infinitesimal generator of the control-free process,
\begin{equation}\label{L}
L^\eps = \frac{1}{2}CC^\top \colon\nabla^2 + (A x)\cdot\nabla\,,
\end{equation}
and the nonlinearity $f$ is independent of $\eps$ and given by
\begin{equation}\label{driver}
f(x,y,z) = \frac{1}{2}x_1^\top  Q_0 x_1^{} - \frac{1}{2} \big|\left(x^\top  N^\top  + B^\top \right)\left(C^\top \right)^\sharp z\big|^2\,.
\end{equation}
Note that $f$ is furthermore independent of $y$ and that the Moore-Penrose pseudoinverse
\[
\left(C^\top \right)^\sharp=C(C^\top C)^{-1}
\]
is unambiguously defined since $z=C^\top \nabla V^\eps$ and  $(Nx+B)\in{\rm range}(C)$, which by noting that  $\left(C^\top \right)^\sharp C^\top $ is the orthogonal projection onto ${\rm range}(C)$ implies that
\[
\big|(x^\top N^\top +B^\top )\nabla V^\eps\big|^2 = \big|(x^\top  N^\top  + B^\top )\left(C^\top \right)^\sharp z\big|^2\,.
\]

The specific semilinear form of the equation is a consequence of the control problem being linear-quadratic. As a consequence, the dynamic programming equation (\ref{hjb}) admits a representation in form of an uncoupled forward-backward stochastic differential equation (FBSDE). To appreciate this point, consider the control-free process $X_s^\eps=X_s^{\eps,u=0}$ with infinitesimal generator $L^\eps$ and define an adapted process $Y_s^\eps=Y_s^{\eps,x,t}$ by
\begin{equation}\label{Ydef}
Y_s^\eps = V^\eps(s,X_s^\eps)\,.
\end{equation}
(We abuse notation and denote both the controlled and the uncontrolled process by $X_s^\eps$.)
Then, by definition, $Y_t^\eps = V^\eps(x,t)$. Moreover, by It\^o's formula and the dynamic programming equation (\ref{hjb}), the pair $(X_s^\eps,Y_s^\eps)_{s\in[t,\tau]}$ can be shown to solve the system of equations
\begin{equation}\label{fbsde}
\begin{aligned}
dX_s^\eps & = AX_s^\eps\,ds + C\,dW_s\,,\quad X^\eps_t=x\\
dY_s^\eps & = - f(X_s^\eps,Y_s^\eps,Z_s^\eps)ds + Z_s^\eps\,dW_s\,,\quad Y_\tau^\eps = q_1(X_\tau^\eps)\,,
\end{aligned}	
\end{equation}
with $Z_s^\eps = C^\top \nabla V^\eps(s,X_s^\eps)$ being the control variable. Here, the second equation is only meaningful if interpreted as a backward equation, since only in this case $Z_s^\eps$ is uniquely defined. To see this, let $f=0$ and $q_{1}(x)=x$ and note that the ansatz (\ref{Ydef}) implies that $Y_{s}^{\eps}$ is adapted to the filtration generated by the forward process $X_{s}^{\eps}$. If the second equation was just a time-reversed SDE then $(Y_{s}^{\eps},Z^{\eps}_{s})\equiv (X_{\tau}^{\eps},0)$ would be the unique solution to the SDE $dY_s^\eps = Z_s^\eps\,dW_s$ with terminal condition $Y_\tau^\eps = X_\tau^\eps$. But such a solution would not be adapted, because $Y_{s}^{\eps}$ for $s<\tau$ would  depend on  the future value $X_{\tau}^{\eps}$ of the forward process.

\subsection*{Remark}
Equation (\ref{fbsde}) is called an \emph{uncoupled FBSDE} because the forward equation for $X_s^\eps$ is independent of $Y_s^\eps$ or $Z_s^\eps$. The fact that the FBSDE is uncoupled furnishes a well-known duality relation between the value function of an LQ optimal control problem and the cumulate generating function of the cost \cite{BD00,DMR96}; specifically, in the case that $N=0$, $B=C$ and the pair $(A,B)$ being completely controllable, it holds that
\begin{equation}\label{duality}
V^\eps(x,t) = -\log\E\left[\exp\left(-\int_t^\tau q_0(X_s^{\epsilon})ds-q_1(X_\tau^{\epsilon})\right)\right],
\end{equation}
with
\[
q_0(x) = \frac{1}{2}x_1^\top  Q_0 x_1^{}\,.
\]
Here the expectation on the right hand side is taken over all realisations of the control-free process $X_s^\eps=X_s^{\eps,u=0}$, starting at $X_t^\eps=x$. By the Feynman-Kac theorem, the function $\psi^\eps=\exp(-V^\eps)$ solves the linear parabolic boundary value problem
\begin{equation}\label{fk}
\left(\frac{\partial}{\partial t} + L^\eps\right)\psi^\eps = q_0(x)\psi^\eps\,, \quad \psi^\eps|_{E^{+}}= \exp\left(-q_1\right)\,,
\end{equation}
which is equivalent to the corresponding dynamic programming equation (\ref{hjb}). 

\section{Model reduction}\label{sec:mor}

The idea now is to exploit the fact that (\ref{fbsde}) is uncoupled, which allows us to derive an FBSDE for the slow variables $\bar{X}_s^\eps=X_{1,s}^\eps$ only, by standard singular perturbation methods. The reduced FBSDE as $\eps\to 0$ will then be of the form
\begin{equation}\label{rfbsde}
\begin{aligned}
d\bar{X}_s & = \bar{A}\bar{X}_s\,ds + \bar{C}\,dW_s\,,\quad \bar{X}_t=x_1\\
d\bar{Y}_s & = - \bar{f}(\bar{X}_s,\bar{Y}_s,\bar{Z}_s)ds + \bar{Z}_s\,dW_s\,,\quad \bar{Y}_\tau = \bar{q}_1(\bar{X}_\tau)\,,
\end{aligned}	
\end{equation}
where the limiting form of the backward SDE follows from the corresponding properties of the forward SDE. Specifically, assuming that the solution of the associated SDE
\begin{equation}\label{assSDE}
	d\xi_s = A_{22}\xi_s ds + C_2 dW_s\,,
\end{equation}
that is governing the fast dynamics as $\eps\to 0$, is ergodic with unique Gaussian invariant measure $\pi={\mathcal N}(0,\Sigma)$, where $\Sigma=\Sigma^\top >0$ is the unique solution to the Lyapunov equation
\begin{equation}\label{lyap}
A_{22}\Sigma + \Sigma A_{22}^\top  = -C_2C_2^\top \,,
\end{equation}
we obtain that, asymptotically as $\eps\to 0$,
\begin{equation}\label{fastProc}
	X_{2,s}^{\eps} \sim \xi_{s/\eps}\,,\quad s>0\,.
\end{equation}
As a consequence, the limiting SDE governing the evolution of the slow process $X_{1,s}^\eps$--- in other words: the forward part of (\ref{rfbsde})---has the coefficients
\begin{equation}
\label{limitCoeff1}
\bar{A}=A^{}_{11}-A^{}_{12}A^{-1}_{22}A^{}_{21}\,,\quad  \bar{C}=C_1-A^{}_{12}A^{-1}_{22}C^{}_{2}\,,
\end{equation}
as following from standard homogenisation arguments \cite{PS08}; a formal derivation is given in the appendix. By a similar reasoning we find that the driver of the limiting backward SDE reads
\begin{equation}\label{limitDriver1}
\bar{f}(x_1,y,z_1) = \int\limits_{\R^{n_f}}  f((x_1,x_2),y,(z_1,0))\,\pi(dx_2)\,,
\end{equation}
specifically,
\begin{equation}\label{limitDriver2}
\bar{f}(x_1,y,z_1) = \frac{1}{2} x_1^\top  \bar{Q}^{}_0 x^{}_1 - \frac{1}{2}\big|\left(x_1^\top  \bar{N}^\top _{} + \bar{B}^\top _{}\right) z^{}_1\big|^2 + K^{}_0\,,
\end{equation}
with
\begin{equation}\label{limitCoeff2}
\bar{Q}^{}_0  = Q_0\,,\quad \bar{N} = C_1^\sharp N^{}_{11}\,,\quad \bar{B} = C_1^\sharp\left(B^{}_1 +  N^{}_{12}\Sigma^{1/2}_{}\right)\,.
\end{equation}
The limiting backward SDE is equipped with a terminal condition $\bar{q}_1$ that  equals $q_1$, namely,
\begin{equation}
\bar{q}_1(x_1) = \frac{1}{2}x_1^\top  Q_{1}x^{}_1\,.
\end{equation}

\subsection*{Interpretation as an optimal control problem}

It is possible to interpret the reduced FBSDE again as the probabilistic version of a dynamic programming equation. To this end, note that (\ref{controllablility}) implies that the matrix pair $(\bar{A},\bar{C})$ satisfies the Kalman rank condition \cite{LuiAnderson1989}
\[
{\rm rank}(\bar{C}|A\bar{C}|A^2\bar{C}|\ldots|A^{n_s-1}\bar{C}) = n_s\,.
\]
As a consequence, the semilinear partial differential equation
\begin{equation}\label{rhjb}
- \frac{\partial V}{\partial t} = \bar{L}V  + \bar{f}(x_1,V,\bar{C}^\top \nabla V)\,, \quad V|_{E_s^{+}}= \bar{q}_1\,,
\end{equation}
with $E_s^{+} = \left([0,T)\times\partial D_s\right)\cup\left(\{T\}\times D_s\right)$ and
\begin{equation}\label{Lbar}
\bar{L} = \frac{1}{2}\bar{C}\bar{C}^\top \colon\nabla^2 + (\bar{A} x_1)\cdot\nabla\,
\end{equation}
has a classical solution $V\in C^{1,2}([0,T)\times D)\cap C^{0,1}(E_s^{+})$. Letting $\bar{Y}_s:=V(s,\bar{X}_s)$, $0\leqslant t\leqslant s\leqslant \tau$, with initial data $\bar{X}_t=x_1$ and $\bar{Z}_s=\bar{C}^\top \nabla V(s,\bar{X}_s)$, the limiting FBSDE (\ref{rfbsde}) can be readily seen to be equivalent to
(\ref{rhjb}). The latter is the dynamic programming equation of the following LQ optimal control problem: minimize the cost functional
\begin{equation}\label{jofv}
\bar{J}(v;t,x_1) = \E\left[\frac{1}{2} \int_t^\tau  (\bar{X}_s^\top  \bar{Q}^{}_0 \bar{X}^{}_s + |v_s|^2)ds + \frac{1}{2}\bar{X}_{\tau}^\top  \bar{Q}^{}_1 \bar{X}^{}_{\tau}\right],
\end{equation}
subject to
\begin{equation}\label{rsde}
d\bar{X}_s = \bar{A} \bar{X}_s ds + \left(\bar{M}\bar{X}_s + \bar{D}\right) v_s \,ds + \bar{C} dw_s\,,\quad  \bar{X}_t = x_1\,,
\end{equation}
where $(w_s)_{s\ge 0}$ denotes standard Brownian motion in $\R^{n_s}$ and we have introduced the new control coefficients $\bar{M}=\bar{C}\bar{N}$ and $\bar{D}=\bar{C}\bar{B}$.

\subsection{Convergence of the control value}

Before we state our main result and discuss its implications for the model reduction of linear and bilinear systems, we recall that basic assumptions that we impose on the system dynamics. Specifically, we say that the dynamics (\ref{controbideq}) and the corresponding cost functional (\ref{bertfuctional}) satisfy \textbf{Condition LQ} if the following holds:

\begin{enumerate}
	\item $(A,C)$ is controllable, and the range of $b(x)=Nx+B$ is a subspace of ${\rm range}(C)$.\label{assCC}
	\item The matrix $A_{22}$ is Hurwitz (i.e., its spectrum lies entirely in the open left complex half-plane) and the matrix pair $(A_{22},C_2)$ is controllable.\label{assErgode}
	\item The driver of the FBSDE (\ref{fbsde}) is continuous and quadratically growing in $Z$. \label{assQuad}
	\item The terminal condition in (\ref{fbsde}) is bounded; for simplicity we set $Q_1=0$ in (\ref{bertfuctional}).\label{assBdd}
\end{enumerate}

Assumption \ref{assErgode} implies that the fast subsystem (\ref{assSDE}) has a unique Gaussian invariant measure $\pi=\mathcal{N}(0,\Sigma)$ with full topological support, i.e., we have $\Sigma=\Sigma^\top >0$.
According to \cite[Prop.~3.1]{BBM86} and \cite{Kobylanski2000}, existence and uniqueness of (\ref{fbsde}) is guaranteed by Assumptions \ref{assQuad} and \ref{assBdd} and the controllability of $(A,C)$ and the range condition, which imply that the transition probability densities of the (controlled or uncontrolled) forward process $X_s^\eps$ are smooth and strictly positive. As a consequence of the complete controllability of the original system, the reduced system (\ref{rsde}) is completely controllable too, which guarantees existence and uniqueness of a classical solution of the limiting dynamic programming equation (\ref{rhjb}); see, e.g., \cite{PP92}.

Uniform convergence of the value function $V^\eps\to V$ is now entailed by the strong convergence of the solution to the corresponding FBSDE as is expressed by the following Theorem.

\begin{Theorem}\label{thm:value}
Let the assumptions of Condition LQ hold. Further let $V^\eps$ be the classical solution of the dynamic programming equation (\ref{hjb}) and $V$ be the solution of (\ref{rhjb}). Then
\[
V^\eps \to  V\,,
\]
uniformly on all compact subsets of $[0,T]\times D$.
\end{Theorem}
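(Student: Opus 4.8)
The plan is to prove the convergence at the level of the FBSDEs and then transfer it to the value functions via the nonlinear Feynman--Kac representation. By construction, $V^\eps(t,x)=Y^\eps_t$ and $V(t,x_1)=\bar Y_t$, where $(Y^\eps,Z^\eps)$ and $(\bar Y,\bar Z)$ are the backward components of (\ref{fbsde}) and (\ref{rfbsde}); since $Y^\eps_t$ and $\bar Y_t$ are deterministic, it suffices to show $Y^\eps_t\to\bar Y_t$ for each fixed $(t,x)$ and then to upgrade this pointwise limit to locally uniform convergence. First I would treat the forward dynamics. The linear slow--fast system in (\ref{fbsde}) has the explicit scaling (\ref{coeffA})--(\ref{coeffBC}); under Assumption \ref{assErgode} the matrix $A_{22}$ is Hurwitz, so the fast component $X^\eps_{2,\cdot}$ equilibrates on the time scale $s/\eps$ to the Gaussian measure $\pi=\mathcal N(0,\Sigma)$ with $\Sigma$ solving the Lyapunov equation (\ref{lyap}), while the slow component $X^\eps_{1,\cdot}$ converges to the reduced diffusion $\bar X$ with coefficients (\ref{limitCoeff1}). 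Because the drift is linear and the noise additive, the corrector (cell) problem is solved explicitly, and the homogenization statement (\ref{fastProc}) can be made quantitative along the lines of \cite{PS08}.

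The second step is to derive a priori estimates on $(Y^\eps,Z^\eps)$ that are uniform in $\eps$. This is where the quadratic growth of the driver (Assumption \ref{assQuad}) matters: standard $L^2$ BSDE theory does not apply, so I would invoke the quadratic-BSDE framework of Kobylanski \cite{Kobylanski2000}. Boundedness of the terminal data (Assumption \ref{assBdd}, with $Q_1=0$) together with the boundedness of the $x_1$-dependent part of $f$ on the bounded slow domain $D_s$ yields, via the comparison principle, a uniform $L^\infty$ bound on $Y^\eps$; this in turn produces a uniform BMO bound on the martingale $\int Z^\eps\,dW$. These bounds are indispensable for controlling the nonlinearity as $\eps\to0$.

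The third step is the passage to the limit in the backward equation. The delicate issue is that the driver $f$ in (\ref{driver}) depends on the fast state $x_2$ and on the fast block of $z$, both of which must be eliminated: $x_2$ by averaging against $\pi$, and the fast block of $z$ by showing it does not survive in the limit. Combining the forward homogenization with the ergodic behaviour (\ref{fastProc}) (and using that the exit time $\tau$ depends on $\eps$ only through the slow component, since $D=D_s\times\R^{n_f}$, so that $\tau$ converges together with $X^\eps_1$), I would show that $\int_t^\tau f(X^\eps_s,Y^\eps_s,Z^\eps_s)\,ds$ converges to $\int_t^\tau\bar f(\bar X_s,\bar Y_s,\bar Z_s)\,ds$, with $\bar f$ given exactly by the $\pi$-average (\ref{limitDriver1}). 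A monotone stability theorem for quadratic BSDEs (in the spirit of \cite{Kobylanski2000}) then upgrades convergence of the data to convergence of the solutions, giving $(Y^\eps,Z^\eps)\to(\bar Y,\bar Z)$ with $Y^\eps\to\bar Y$ uniformly in $s$; evaluating at $s=t$ yields the pointwise limit $V^\eps(t,x)\to V(t,x_1)$. Uniqueness of the solution of the reduced FBSDE, guaranteed by complete controllability of $(\bar A,\bar C)$ as noted after Condition LQ, ensures that the whole family, not merely subsequences, converges.

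Finally, to promote pointwise convergence to uniform convergence on compact subsets of $[0,T]\times D$, I would establish equicontinuity of the family $\{V^\eps\}$. The uniform $L^\infty$/BMO bounds, together with the smoothness and strict positivity of the transition densities of $X^\eps_s$ recorded after Condition LQ, provide a modulus of continuity in $(t,x_1)$ that is uniform in $\eps$ on compacts; the dependence on the fast coordinate $x_2$ must be shown to flatten as $\eps\to0$, consistently with the fact that the limit $V$ is independent of $x_2$. An Arzel\`a--Ascoli argument, combined with the already-identified pointwise limit, then yields locally uniform convergence. The main obstacle throughout is the combination of the quadratic growth in $z$ with the singular $\eps^{-1/2},\eps^{-1}$ scalings in (\ref{coeffA})--(\ref{coeffBC}): one must ensure that the a priori bounds are genuinely uniform in $\eps$ and, in particular, that the fast block of $Z^\eps$---although multiplied by the large factor $\eps^{-1/2}$ inside $C^\top\nabla V^\eps$---contributes to the limit only through its averaged quadratic effect, so that precisely the reduced driver (\ref{limitDriver1})--(\ref{limitCoeff2}) emerges.
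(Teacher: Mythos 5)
There is a genuine gap, and it sits exactly at the step you yourself flag as ``delicate'': the passage to the limit in the backward equation. A Kobylanski-type (monotone) stability theorem requires the drivers to converge as functions of $(y,z)$ --- monotonically, or locally uniformly, for a.e.\ $(s,\omega)$ --- but here $f(X^\eps_s,y,z)$ for \emph{fixed} $(y,z)$ does not converge strongly as $\eps\to 0$: it oscillates with the fast component $X^\eps_{2,s}$ and converges only in a time-averaged (weak) sense against the invariant measure $\pi$. So no stability theorem can be invoked to ``upgrade convergence of the data''; the data do not converge in the required sense. Your proposed substitute, namely showing directly that $\int_t^\tau f(X^\eps_s,Y^\eps_s,Z^\eps_s)\,ds$ converges to $\int_t^\tau \bar f(\bar X_s,\bar Y_s,\bar Z_s)\,ds$, is circular: that integral involves the unknown solutions $(Y^\eps,Z^\eps)$ whose convergence is precisely what is to be proved. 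Even along a subsequence extracted from your uniform $L^\infty$/BMO bounds, $Z^\eps$ converges at best weakly in $L^2$, and the driver (\ref{driver}) is \emph{quadratic} in $z$; quadratic functionals are not weakly continuous, so the limit of the nonlinear term cannot be identified this way. This is not a technicality --- it is the central difficulty of the problem, and your outline does not contain a mechanism that resolves it.

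The paper's proof (following Briand--Hu \cite{BH99}) supplies exactly the missing mechanism: a corrector argument that never passes to a weak limit. One compares $Y^\eps_s$ directly with the \emph{known smooth solution} $V$ of the limit equation (\ref{rhjb}) evaluated along the slow forward component, i.e.\ one studies $y^\eps_s = Y^\eps_s - V(s,X^\eps_{1,s})$, which solves a BSDE (\ref{BSDEG}) whose driver splits as $G_1 + G_2^\eps$. The term $G_1$, evaluated at $(y,z)=(0,0)$, is bounded and centered with respect to $\pi$, so the Poisson-equation Lemma \ref{lemadriver} (solve $L_0\psi=-h$, apply It\^o, bound the martingale terms) converts the merely averaged convergence into a strong estimate $\E[(\int G_1\,ds)^2]\leqslant C_1\eps$; the term $G_2^\eps=(L-\bar L)V$ is handled by the functional CLT. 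Then It\^o applied to $|y^\eps_s|^2$ plus Gronwall, using only the boundedness of $\nabla V$ on the bounded slow domain, gives $\E[\sup_s|y^\eps_s|^2+\int|z^\eps_s|^2ds]\leqslant C\eps$, hence $|V^\eps(t,x)-V(t,x_1)|\leqslant C\sqrt{\eps}$ with $C$ finite on compacts. Note this yields the fast block of $Z^\eps$ vanishing in $L^2$ as a byproduct, and the explicit rate makes your entire fourth step (equicontinuity and Arzel\`a--Ascoli) unnecessary. If you want to rescue your architecture, the fix is to replace the stability-theorem step by this corrector comparison; your steps 1, 2 and 4 are then either absorbed or no longer needed.
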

The proof of the Theorem is given in Appendix \ref{sec:value}. For the reader's convenience, we present a formal derivation of the limit equation in the next subsection.

\subsection{Formal derivation of the limiting FBSDE}\label{sec:derivation}

Our derivation of the limit FBSDE follows standard homogenisation arguments (see \cite{FW12,K63,PS08}), taking advantage of the fact that the FBSDE is uncoupled. To this end we consider the following linear evolution equation
\begin{equation}\label{bke}
	\left(\frac{\partial}{\partial t} - L^\eps\right) \phi^\eps = 0\,,\quad \phi^\eps(x_1,x_2,0) = g(x_1)\,
\end{equation}
for a function $\phi^\eps\colon  \bar{D}_s\times\R^{n_f}\times[0,T]$ where
\begin{equation}\label{Leps}
L^\eps = \frac{1}{\eps} L_0 + \frac{1}{\sqrt{\eps}}L_1 + L_2\,,
\end{equation}
with
\begin{subequations}\label{Li}
\begin{alignat}{3}
L_0 & = \frac{1}{2}C_{2}C_{2}^\top \colon\nabla_{x_2}^2 + (A_{22} x_2)\cdot\nabla_{x_2}\\
L_1 & = \frac{1}{2}C_1C_2^\top \colon\nabla^2_{x_2x_1}+ \frac{1}{2}C_2C_1^\top \colon\nabla^2_{x_1x_2}+ (A_{12} x_2)\cdot\nabla_{x_1}+(A_{21} x_1)\cdot\nabla_{x_2}\\
L_2 & = \frac{1}{2}C_{1}C_{1}^\top \colon\nabla_{x_1}^2 + (A_{11} x_1)\cdot\nabla_{x_1}
\end{alignat}
\end{subequations}
is the generator associated with the control-free forward process $X^\eps_s$ in (\ref{fbsde}). We follow the standard procedure of \cite{PS08} and consider the perturbative expansion
\[
\phi^\eps = \phi_0 + \sqrt{\eps}\phi_{1} + \eps\phi_{2} + \ldots
\]
that we insert into the Kolmogorov equation (\ref{bke}). Equating different powers of $\eps$ we find a hierarchy of equations, the first three of which read
\begin{equation}\label{epsComp}
	L_0\phi_0 = 0\,,\quad L_0\phi_{1} = - L_1\phi_{0}\,,\quad	L_0\phi_2 = \frac{\partial\phi_0}{\partial t} - L_1\phi_1 - L_2\phi_0\,.
\end{equation}
Assumption \ref{assErgode} on page \pageref{assErgode} implies that $L_0$ has a one-dimensional nullspace that is spanned by functions that are constant in $x_2$, and thus the first of the three equations implies that $\phi_0$ is independent of $x_1$. Hence the second equation---the cell problem---reads
\begin{equation}\label{cell}
L_0\phi_1 = -(A_{12}x_2)\cdot\nabla\phi_0(x_1,t)\,.
\end{equation}
The last equation has a solution by the Fredholm alternative, since the right hand side averages to zero under the invariant measure $\pi$ of the fast dynamics that is generated by the operator $L_0$, in other words, the right hand side of the linear equation is orthogonal to the nullspace of $L_0^*$ spanned by the density of $\pi$.\footnote{Here $L_0^*$ is the formal $L^2$ adjoint of the operator $L_0$, defined on a suitable dense subspace of $L^2$.} The form of the equation suggests the general ansatz
\[\phi_1 = \psi(x_2)\cdot\nabla\phi_0(x_1,t) + R(x_1,t)
\]
where the function $R$ plays no role in what follows, so we set it equal to zero. Since $L_0\psi = -(A_{12}x_2)^\top $, the function $\psi$ must be of the form $\psi=Qx_2$ with a matrix $Q\in\R^{n_s\times n_f}$. Hence
\[
Q = -A_{12}A_{22}^{-1}\,.
\]
Now, solvability of the last of the three equations requires again that the right hand side averages to zero under $\pi$, i.e.
\begin{equation}\label{homogEqn0}
\int_{\R^{n_f}}\left(\frac{\partial\phi}{\partial t} + L_1\left[\left(A_{12}A_{22}^{-1}x_2\right)\cdot\nabla\phi\right] - L_2\phi\right)\pi(dx_2)\,,
\end{equation}
which formally yields the limiting equation for $\phi=\phi_0(x_1,t)$. Since $\pi$ is a Gaussian measure with  mean $0$ and covariance $\Sigma$ given by (\ref{lyap}), the integral (\ref{homogEqn0}) can be explicitly computed:
\begin{equation}\label{homogEqn1}
\left(\frac{\partial}{\partial t} - \bar{L}\right)\phi\ =0,,\quad \phi(x_1,0) = g(x_1)\,,
\end{equation}
where $\bar{L}$ is given by (\ref{Lbar}) and the initial condition $\phi(\cdot,0)=g$ is a consequence of the fact that the initial condition in (\ref{bke}) is independent of $\eps$. By the controllability of the pair $(\bar{A},\bar{C})$, the limiting equation (\ref{homogEqn1}) has a unique classical solution and uniform convergence $\phi^\eps\to\phi$ is guaranteed by standard results, e.g., \cite[Thm.~20.1]{PS08}.

Since the backward part of (\ref{fbsde}) is independent of $\epsilon$, the final form of the homogenised FBSDE (\ref{rfbsde}) is found by averaging over $x_2$, with the unique solution of the corresponding backward SDE satisfying $Z_{2,s}=0$ as the averaged backward process is independent of $x_2$.

\section{Numerical studies}\label{sec:num}
In this section we presents numerical results for linear and bilinear control systems and discuss the numerical discretisation of uncoupled FBSDE associated with LQ stochastic control problems. We begin with the latter.

\subsection{Numerical FBSDE discretisation}

The fact that (\ref{fbsde}) or (\ref{rfbsde}) are decoupled entails that they can be discretised by an explicit time-stepping algorithm. Here we utilize a variant of the least-squares Monte Carlo algorithm proposed in \cite{BenderSteiner2012}; see also \cite{GobetEtal2017}. The convergence of numerical schemes for FBSDE with quadratic nonlinearities in the driver has been analysed in \cite{TurkedjievDissertation2013}.

The least-squares Monte Carlo scheme is based on the Euler discretisation of (\ref{fbsde}):
\begin{equation}\label{fbsdeEuler}
	\begin{aligned}
	\hat{X}_{n+1}  & = \hat{X}_n + \Delta t A\hat{X}_n + \sqrt{\Delta t}C\xi_{n+1}\\
	\hat{Y}_{n+1} & = \hat{Y}_{n} - \Delta t f(\hat{X}_n,\hat{Y}_n,\hat{Z}_n) + \sqrt{\Delta t}\hat{Z}_n\cdot\xi_{n+1}
	\end{aligned}
\end{equation}
where $(\hat{X}_n,\hat{Y}_n)$ denotes the numerical discretisation of the joint process $(X^\eps_s,Y^\eps_s)$, where we set $X_s^\eps=X_{\tau_D}^\eps$ for $s\in(\tau_D,T]$ when $\tau_D<T$, and $(\xi_k)_{k\ge 1}$ is an i.i.d.~sequence of normalised Gaussian random variables.
Now let
\[
\mathcal{F}_n = \sigma\big(\big\{\hat{W}_k: 0\leqslant k\leqslant n\big\}\big)
\]
be the $\sigma$-algebra generated by the discrete Brownian motion $\hat{W}_n:=\sqrt{\Delta t}\sum_{i\leqslant n}\xi_i$. By definition the joint process $(X_s^\eps,Y_s^\eps)$ is adapted to the filtration generated by $(W_r)_{0\leqslant r\leqslant s}$, therefore
\begin{equation}\label{condExp}
\hat{Y}_n = \E\big[\hat{Y}_n|\mathcal{F}_n\big] = \E\big[\hat{Y}_{n+1} + \Delta t f(\hat{X}_n,\hat{Y}_n,\hat{Z}_n)|\mathcal{F}_n\big]\,,
\end{equation}
where we have used that $\hat{Z}_n$ is independent of $\xi_{n+1}$. In order to compute $\hat{Y}_n$ from $\hat{Y}_{n+1}$ we use the identification of $Z^\eps_s$ with $C^\top \nabla V^\eps(s,X^\eps_s)$ and replace $\hat{Z}_n$ in (\ref{condExp}) by
\begin{equation}\label{Zn}
\hat{Z}_n = C^\top \nabla V^\eps (t_n,\hat{X}_n)\,,
\end{equation}
which, the parametric ansatz (\ref{YofX}) for the $V^\eps$ makes the overall scheme explicit in $\hat{X}_n$ and $\hat{Y}_n$.

\subsubsection*{Least-squares solution of the backward SDE}
In order to evaluate the conditional expectation $\hat{Y}_n=\E[\cdot|\mathcal{F}_n]$ we recall that a conditional expectation can be characterised as the solution to the following quadratic minimisation problem:
\[
\E\big[S|\mathcal{F}_n\big] = \mathop{\rm argmin}_{Y\in L^2,\, \mathcal{F}_n \textrm{-measurable}}\E[|Y-S|^2]\,.
\]
Given $M$ independent realisations $\hat{X}_n^{(i)}$, $i=1,\ldots,M$ of the forward process $\hat{X}_n$, this suggests the approximation scheme
\begin{equation}\label{condVar}
\hat{Y}_n \approx \mathop{\rm argmin}_{Y=Y(\hat{X}_n)}\frac{1}{M}\sum_{i=1}^{M}\Big|Y -  \hat{Y}_{n+1}^{(i)} - \Delta t f\big(\hat{X}^{(i)}_n,\hat{Y}^{(i)}_{n+1},C^\top \hat{Y}^{(i)}_{n+1}\big)\Big|^2\,,
\end{equation}
where $\hat{Y}^{(i)}$ is defined by $\hat{Y}^{(i)}=Y\big(\hat{X}^{(i)}\big)$ with terminal values
\[
\hat{Y}^{(i)}_N = q^{}_1\big(X^{(i)}_N\big)\,\quad \tau=N\Delta t\,.
\]
(Note that $N=N_D$ is random.)
For simplicity, we assume in what follows that the terminal value is zero, i.e., we set $q_1=0$. (Recall that the existence and uniqueness result from \cite{Kobylanski2000} requires $q_1$ to be bounded.) To represent $\hat{Y}_n$ as a function $Y(\hat{X}_n)$ we use the ansatz
\begin{equation}\label{YofX}
Y(\hat{X}_n) = \sum_{k=1}^{K} \alpha_k(n) \varphi_k(\hat{X}_n)\,,
\end{equation}
with coefficients $\alpha_1(\cdot),\ldots,\alpha_K(\cdot)\in\R$
and suitable basis functions $\varphi_1,\ldots,\varphi_K\colon\R^n\to\R$ (e.g.~Gaussians). Note that the coefficients $\alpha_k$ are the unknowns in the least-squares problem (\ref{condVar}) and thus are independent of the realisation.
Now the least-squares problem that has to be solved in the $n$-th step of the backward iteration is of the form
\begin{equation}\label{leastSq}
\hat{\alpha}(n) = \mathop{\rm argmin}_{\alpha\in\R^K} \left\|A_n\alpha - b_n\right\|^2\,,
\end{equation}
with coefficients
\begin{equation}\label{leastSqA}
A_n = \left(\varphi_k\Big(\hat{X}_n^{(i)}\Big)\right)_{i=1,\ldots,M;k=1,\ldots,K}\,
\end{equation}
and data
\begin{equation}\label{leastSqb}
b_n = \left(\hat{Y}_{n+1}^{(i)} - \Delta t f\big(\hat{X}^{(i)}_n,\hat{Y}^{(i)}_{n+1},C^\top \hat{Y}^{(i)}_{n+1}\big)\right)_{i=1,\ldots,M}\,.
\end{equation}
Assuming that the coefficient matrix $A_n\in\R^{M\times K}$, $K\leqslant M$ defined by (\ref{leastSqA}) has maximum rank $K$, then the solution to the least-squares problem (\ref{leastSq}) is given by
\begin{equation}\label{leastSqSol}
	\hat{\alpha}(n) = \left(A_n^\top A^{}_n\right)^{-1}A_n^\top  b^{}_n\,.
\end{equation}

The thus defined scheme is strongly convergent of order 1/2 as $\Delta t\to 0$ and $M,K\to\infty$ as has been analysed by \cite{BenderSteiner2012}. Controlling the approximation quality for finite values $\Delta t, M, K$, however, requires a careful adjustment of the simulation parameters and appropriate basis functions, especially with regard to the condition number of the matrix $A_n$.
\subsection{Numerical example}
Illustrating our theoretical findings of Theorem \ref{thm:value}, we consider a linear system  of form \eqref{controledeq}
where the matrices $A,\ B$ and $C$ are given by
\begin{equation*}
A= \left(\begin{array}{@{}c@{\quad}c}
\begin{array}{ccc}
        0
        \end{array} & \begin{array}{ccc}
        {\epsilon}^{-1/2}  \,   I_{n \times n}
        \end{array}\\ \\
    -{\epsilon}^{-1/2}  \,   I_{n \times n} & -\gamma\, {\epsilon}^{-1}\, I_{n \times n}
    \end{array}\right) \in \R^{2n\times 2n}\,,
\end{equation*}
and
\begin{equation*}
B=C=\left(
\begin{array}{cc}
0 \\
\sigma \,{\epsilon}^{-1/2} \, I_{n \times n} \\
\end{array}
\right)\in\R^{2n\times n}\,.
\end{equation*}

This is an instance of a controlled Langevin equation with friction and noise coeffcient $\gamma,\sigma > 0$ which are assumed to fulfill the fluctation-dissipation relation 
\[2 \gamma = \sigma^2\,.\] 
In the example we let $\gamma = 1/2$ and $\sigma = 1$. The quadratic cost functional \eqref{bertfuctional} is determined by the running cost via $Q_0= I_{n \times n} \, \in \mathbb{R}^{n \times n}
$ and we apply no terminal cost, i.e. $ Q_1= 0. $\\
The associated effective equations are given by \eqref{jofv}--\eqref{rsde}, where \[
 \bar{A} = - \gamma^{-1} I_{n \times n},\quad  \bar{D}=\bar{C} = \sigma \gamma^{-1},\quad, \bar{M}=0,\quad \bar{Q}_0 = I_{n \times n},\quad \bar{Q}_1 = 0 \quad \in \mathbb{R}^{n \times n}.
\]
We apply the previously described FBSDE scheme \eqref{fbsdeEuler},\eqref{YofX},\eqref{leastSq}--\eqref{leastSqSol}, which was shown to yield good results in \cite{IHP}, to both the full and the reduced system, and we choose $n=3$, i.e the full system is six dimensional. To this end we choose the basis functions
\[\phi^{\mu_k, \delta}_{k,n}(x) = \exp\left(-\frac{(\mu_k-x)^2}{2 \delta}\right) \] where $\delta = 0.1$ is fixed but $\mu_k = \mu_k(n)$ changes in each timestep such that the basis follows the forward process. For this, we simulate $K$ additional forward trajectories $X^{(k)}, k=1,\ldots,K $ and set $\mu_k(n) = X^{(k)}_n$. 

We choose the parameters for the numerics as follows. The number of basis functions $K$ is given by $K=9$ for the reduced system and $K^{\epsilon}=40$ for the full system. We choose these values because the maximally observed rank of the matrices $A_n$ defined in \eqref{leastSqA} is 9 for the reduced system and we want these matrices to have rank $K$. For the full system we could have used a greater values for $K$, but we want to keep the computational effort reasonable. Further, we choose $\Delta t = 5 \cdot 10^{-5}$ , the final time $T = 0.5$ and the number of realisations $M=400.$ \\
We let the whole algorithm run five times and compute the distance between the value functions of the full and reduced systems \[E(\epsilon):=|V^{\epsilon}(0,x) - V(0,x)|\] for which convergence of order $1/2$ was found in the proof of Theorem \ref{thm:value}. Indeed, this is the order of convergence which we also observe in the numerics of our example as can be seen in figure \ref{fig:convorder} where we depict the mean and standard deviation of $E(\epsilon)$.
\begin{figure}[h]
 \centering
  \includegraphics[width=0.7\textwidth]{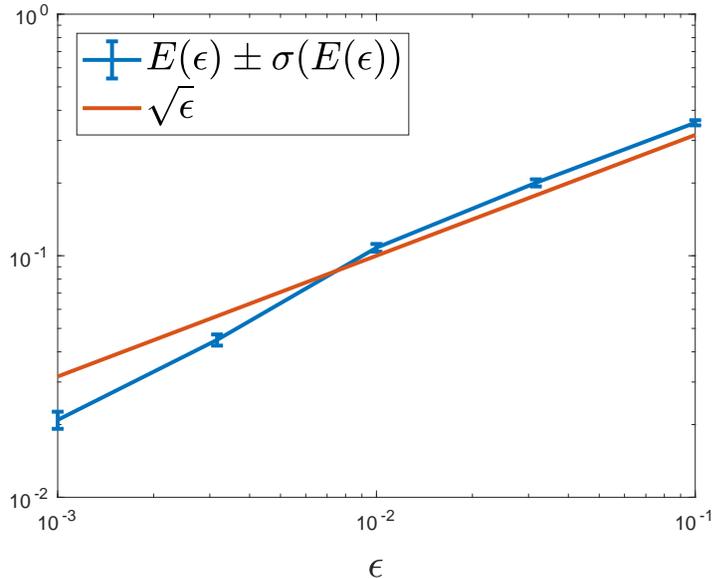}
  \caption{Plot of the mean of $E(\epsilon) \ \pm $ its standard deviation $(\sigma(E(\epsilon)))$ and for comparison of $\sqrt{\epsilon}$ against $\epsilon$ on a doubly logarithmic scale: we observe convergence of order $1/2$ as predicted by the theory. } \label{fig:convorder}

\end{figure}

\subsection{Discussion}

We shall now discuss the implications of the above simple example when it comes to more complicated dynamical systems. As a general remark the results show that it is possible to to apply model reduction before solving the corresponding optimal control problem where the control variable in the original equation can simply be treated as a parameter. This is in accordance with the general model reduction strategy in control engineering; see e.g. \cite{Antbook,Baur2014}  and the references therein. Our results not only guarantee convergence of the value function via convergence of $Y^{\eps}$, but they also imply strong convergence of the optimal control, by the convergence of the control process $Z^{\eps}$ in $L^{2}$. (See the appendix for details.)        
This means that in the case of a system with time scale separation, our result is highly valuable since we can resort to the reduced system for finding the optimal control which can then be applied to the full systems dynamics. %

We stress that our results carry over to fully nonlinear stochastic control problems which have a similar LQ structure \cite{ZLPH14}. Clearly, for realistic (i.e. high-dimensional or nonlinear) systems the identification of a small parameter $\eps$ remains challenging, and one has to resort to e.g.~semi-empirical approaches, such as \cite{Franzke05,Lall2002}. 

If the dynamics is linear, as is the case here, small parameters may be identified using system theoretic arguments based on balancing transformations (see, e.g., \cite{H11,HSZ13}). These approaches require that the dynamics is either linear or bilinear in the state variables, but the aforementioned duality for the quasi-linear dynamic programming equation can be used here as well in order to change the drift of the forward SDE from some nonlinear vector field, say, $b$ to a linear vector field $b_0=Ax$. Assuming that the noise coefficient $C$ is square and invertible and ignoring $\eps$ and the boundary condition for the moment, it is easy to see that the dynamic programming PDE (\ref{hjb}) can be recast as
\begin{align*}
-\frac{\partial V^{\eps}}{\partial t} = \tilde{L}V + \tilde{f}(x,V^{\eps},C^{\top}\nabla_{x}V^{\eps}) = 0\,,
\end{align*}
Here 
\[
\tilde{L} = \frac{1}{2}CC^{\top} + b(x)\cdot\nabla \]
is the generator of a forward SDE with nonlinear drift $b$, and
\[
\tilde{f}(x,y,z) = f(x,y,z) +  C^{-1}(Ax - b(x))\cdot z\,.
\]
is the driver of the corresponding backward SDE. Even though the change of drift is somewhat arbitrary, it shows that by changing the driver in the backward SDE it is possible to reduce the control problem to one with linear drift that falls within the category that is considered in this paper, at the expense of having a possibly non-quadratic cost functional. 

\subsubsection*{Remark}
Changing the drift may be advantageous in connection with the numerical FBSDE solver. In the martingale basis approach of Bender and Steiner \cite{BenderSteiner2012}, the authors have suggested to use basis functions that are defined as conditional expectations of certain linearly independent candidate functions over the forward process, which makes the basis functions martingales. Computing the martingale basis, however, comes with a large computational overhead, which is why the authors consider only cases in which the conditional expectations can be computed analytically. Changing the drift of the forward SDE may thus be used to simplify the forward dynamics so that its distribution becomes analytically tractable.

\section{Conclusions and outlook}\label{sec:out}
We have given a proof of concept that model reduction methods for singularly perturbed bilinear control systems can be applied to the dynamics, before solving the corresponding optimal control problem. The key idea that to connect the HJB corresponding to our stochastic optimal control which is a semi-linear PDE to a perturbed forward-backward SDE which is decoupled, so  we benefit from this end to derive a reduced FBSDE as the perturbation parameter $\eps$ goes to zero. The reduced FBSDE can then be interpreted as a reduced stochastic control problem, and we have proved uniform convergence of the corresponding value function. As an auxiliary  result, we obtain that the optimal control converges as well in a strong sense, which implies that the optimal control computed from the reduced system can be used to control the original dynamics. 

We presented numerical results for linear control system and we discussed the numerical discretisation of uncoupled FBSDE, based on the computation of conditional expectations. For the latter the choice of the basis functions plays an essential role, and how to cleverly choose the ansatz functions, possibly exploiting that the forward SDE has an explicit solution (see e.g. \cite{BenderSteiner2012}) is an important topic, especially for high dimensional problems. We leave the question regarding the adaptive choice of ansatz functions to future work.  

Another class of important problems, that we have not considered in this article, are slow-fast systems with vanishing noise. The natural question here is how the limit equation depend on the order in which noise and time scale parameters go to zero. This question has important consequences for the associated deterministic control problems and its regularisation by noise. We leave this topic for future work too.  

\appendix

\section{Proofs and technical lemmas}\label{sec:proofs}

The idea of the proof of Theorem \ref{thm:value} closely follows the work \cite{BH99}, with the main differences being (a) that we consider slow-fast systems exhibiting three time scales, in particular the slow equation contains singular $\mathcal{O}(\eps^{-1/2})$ terms, and (b) that the coefficients of the fast dynamics are not periodic, with the fast process being asymptotically Gaussian as $\eps\to 0$; in particular the $n_f$-dimensional fast process lives on the unbounded domain $\R^{n_f}$.

\subsection{Poisson equation Lemma}\label{sec:poisson}

Theorem \ref{thm:value} rests on the following Lemma that is similar to a result in \cite{BLP78}.

\begin{Lemma}
	\label{lemadriver}
	Suppose that the assumptions of Condition LQ on page \pageref{assBdd} hold and define $h\colon[0,T]\times \R^{n_s}\times \mathbb{R}^{n_f} \to \mathbb{R}$ to be a function of the class $C_b^{1,2,2}$. Further assume that  $h$ is centered with respect to the invariant measure $\pi$ of the fast process.
	Then for every $t\in[0,T]$ and initial conditions  $(X_{1,u}^\eps,X_{2,u}^\eps) = (x_1,x_2) \in \R^{n_s}\times\mathbb{R}^{n_f}$, $0\leqslant u< t$,  we have
	\begin{equation}
	\lim_{\eps\to 0}\E\left[ \left( \int_{u}^{v} h(s,X^{\epsilon}_{1,s},X^{\epsilon}_{2,s})ds \right)^2 \right] = 0\,,\quad 0\leqslant u < v\leqslant t\,.
	\end{equation}
\end{Lemma}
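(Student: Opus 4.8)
The plan is to use the standard corrector (Poisson-equation) technique of homogenisation, adapted to the non-compact Gaussian setting of the fast variable. First I would introduce a corrector $\chi$ solving the cell problem associated with the fast generator $L_0$ from (\ref{Li}), with $(s,x_1)$ treated as frozen parameters:
$$
L_0\chi(s,x_1,x_2) = -h(s,x_1,x_2)\,,\qquad \int_{\R^{n_f}}\chi(s,x_1,x_2)\,\pi(dx_2)=0\,.
$$
By Assumption \ref{assErgode}, $A_{22}$ is Hurwitz and $(A_{22},C_2)$ is controllable, so the fast Ornstein--Uhlenbeck process (\ref{assSDE}) is exponentially ergodic with unique Gaussian invariant measure $\pi$; since $h(s,x_1,\cdot)$ is $\pi$-centered, the Fredholm alternative yields a unique centered solution with the integral representation $\chi(s,x_1,x_2)=\int_0^\infty (P_r h(s,x_1,\cdot))(x_2)\,dr$, where $P_r$ denotes the semigroup generated by $L_0$.

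Next I would record the regularity and growth estimates that drive the argument. Using the explicit Gaussian representation $P_r f(x_2)=\E[f(e^{A_{22}r}x_2+\eta_r)]$ with $\eta_r$ Gaussian, together with $\|e^{A_{22}r}\|\le Ce^{-\lambda r}$ (Hurwitz) and $h\in C_b^{1,2,2}$, the gradient identity $\nabla_{x_2}P_r f = e^{A_{22}^\top r}P_r\nabla_{x_2}f$ shows that $\nabla_{x_2}\chi$ and $\nabla^2_{x_2}\chi$ are bounded, while $\chi$ itself grows at most linearly in $x_2$. Crucially, differentiating the centering identity in $(s,x_1)$ (legitimate since $h\in C_b^{1,2,2}$) shows that $\partial_s h$ and $\nabla_{x_1}h$ are again $\pi$-centered, so the same representation produces $\partial_s\chi$, $\nabla_{x_1}\chi$ and $\nabla^2_{x_1}\chi$ with at most linear growth in $x_2$. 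In particular $\chi\in C^{1,2,2}$, which legitimises the It\^o step below.

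The core of the argument is then to apply It\^o's formula to $\chi(s,X^\eps_{1,s},X^\eps_{2,s})$ with the full generator $L^\eps=\eps^{-1}L_0+\eps^{-1/2}L_1+L_2$ from (\ref{Leps}). Substituting $L_0\chi=-h$, splitting the martingale according to the block structure of $C$, and rearranging gives
\begin{equation*}
\begin{aligned}
\int_u^v h\,ds = {}& -\eps\big[\chi(v,\cdot)-\chi(u,\cdot)\big] + \eps\int_u^v\big(\partial_s\chi+L_2\chi\big)\,ds + \sqrt{\eps}\int_u^v L_1\chi\,ds \\
& + \eps\int_u^v(\nabla_{x_1}\chi)^\top C_1\,dW_s + \sqrt{\eps}\int_u^v(\nabla_{x_2}\chi)^\top C_2\,dW_s\,.
\end{aligned}
\end{equation*}
Each term now carries a prefactor $\eps$ or $\sqrt{\eps}$, so squaring, taking expectations and using $\big(\sum_{i=1}^5 a_i\big)^2\le 5\sum_i a_i^2$ reduces the claim to showing that the five second moments stay bounded uniformly in $\eps$. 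For the boundary and drift terms this follows from the growth bounds on $\chi$, $\partial_s\chi$ and $L_2\chi$; for the two stochastic integrals it follows from the It\^o isometry together with the bounds on $\nabla_{x_1}\chi$ and $\nabla_{x_2}\chi$. The critical term is $\sqrt{\eps}\int_u^v L_1\chi\,ds$, whose square expectation is dominated by $\eps\,C(v-u)\int_u^v\big(1+\E|X^\eps_{1,s}|^2+\E|X^\eps_{2,s}|^2\big)\,ds$.

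The main obstacle, and the point where the non-periodic unbounded-domain setting departs from the classical periodic case of \cite{BLP78}, is twofold: (i) establishing that the corrector and its derivatives have controlled (here, at most linear) growth in the unbounded fast variable $x_2$, which I handle via the exponential contraction of the OU semigroup; and (ii) obtaining uniform moment bounds $\sup_{\eps,\,s\in[0,T]}\E|X^\eps_{2,s}|^p<\infty$ (for the required orders $p$) on the fast component of the forward process. The latter I would establish by a Lyapunov argument exploiting the strong restoring drift $\eps^{-1}A_{22}x_2$, again using that $A_{22}$ is Hurwitz so that it dominates the $\eps^{-1/2}$ coupling to the slow variable; this is the genuine technical crux, the remainder being bookkeeping once the corrector estimates and moment bounds are in hand. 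Given (i) and (ii), all five terms vanish as $\eps\to0$, which proves the claim.
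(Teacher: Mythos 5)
Your proposal is correct and follows essentially the same route as the paper's proof: solve the Poisson (cell) problem $L_0\psi=-h$ with $(s,x_1)$ frozen, apply It\^o's formula to the corrector along the full slow--fast dynamics, rearrange so that every surviving term carries a prefactor $\eps$ or $\sqrt{\eps}$, and then bound the second moments of the drift integrals and the two martingale parts. The only difference lies in the supporting details: where you construct the corrector explicitly from the Ornstein--Uhlenbeck semigroup and supply growth and uniform-in-$\eps$ moment estimates for the fast process, the paper instead invokes hypoellipticity of $L_0$ and the Pardoux--Veretennikov theorem to obtain a smooth, bounded solution of the Poisson equation, and your more explicit treatment of the linear-in-$x_2$ contributions (e.g.\ in $L_1\psi$) is a legitimate, arguably more careful, way of justifying the uniform bounds that the paper asserts directly.
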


\begin{proof}
	We remind the reader of the definition (\ref{Li}) of the differential operators $L_0,\,L_1$ and $L_2$, and consider the Poisson equation
	\begin{equation}
	\label{PDElemma}
	L_0 \psi=-h
	\end{equation}
	on the domain $\R^{n_f}$. (The variables $x_1\in \R^{n_s}$ and $t\in[0,T]$ are considered as parameters.) Since $h$ is centered \wrt $\pi$, equation (\ref{PDElemma}) has a solution by the Fredholm alternative. 	
	By Assumption \ref{assErgode} $L_0$ is a hypoelliptic operator in $x_2$ and thus by \cite[Thm.~2]{Veretennikov3}, the Poisson equation (\ref{PDElemma}) has a unique solution that is smooth and bounded.
	Applying It{\^o}'s formula to $\psi$ and introducing the shorthand
	$\delta\psi(u,v) = \psi(v,X^{\epsilon}_{1,v},X_{2,v}^{\epsilon})- \psi(u,x_{1},x_{2})$
		yields
	\begin{equation}\label{deltaPsi}
	\begin{aligned}
	\delta\psi(u,v) = & \int_{u}^{v}(\partial_t\psi+L_2 \psi)(s,X^{\epsilon}_{1,s},X_{2,s}^{\epsilon})ds +  \frac{1}{\sqrt{\epsilon}}\int_{u}^{v}L_1 \psi(s,X^{\epsilon}_{1,s},X_{2,s}^{\epsilon})ds\\
	& + \frac{1}{\epsilon}\int_{u}^{v} L_0 \psi(s,X^{\epsilon}_{1,s},X_{2,s}^{\epsilon})ds + M_1(u,v) + \frac{1}{\sqrt{\eps}}M_2(u,v)\,,
	\end{aligned}
	\end{equation}
	where $M_1$ and $M_2$ are square integrable martingales \wrt the natural filtration generated by the Brownian motion $W_s$:
		\begin{equation}\label{M1M2}
	\begin{aligned}
	M_1(u,v) = & \int_{u}^{v}(\partial_t\psi+L_2 \psi)(s,X^{\epsilon}_{1,s},X_{2,s}^{\epsilon})ds +  \frac{1}{\sqrt{\epsilon}}\int_{u}^{v}L_1 \psi(s,X^{\epsilon}_{1,s},X_{2,s}^{\epsilon})ds\\
	& + \frac{1}{\epsilon}\int_{u}^{v} L_0 \psi(s,X^{\epsilon}_{1,s},X_{2,s}^{\epsilon})ds + M_1(u,v) + \frac{1}{\sqrt{\eps}}M_2(u,v)\,,
	\end{aligned}
	\end{equation}

	 By the properties of the solution to (\ref{PDElemma}) the first three integrals on the right hand side are uniformly bounded in $u$ and $v$, and thus
	\begin{align*}
	\int_{u}^{v} h(s,X^{\epsilon}_{1,s},X_{2,s}^{\epsilon})ds  = & - \eps\delta\psi(u,v) + \eps\int_{u}^{v}(\partial_t\psi+L_2 \psi)(s,X^{\epsilon}_{1,s},X_{2,s}^{\epsilon})ds \\ & +  \sqrt{\epsilon}\int_{u}^{v}L_1 \psi(s,X^{\epsilon}_{1,s},X_{2,s}^{\epsilon})ds+ \eps M_1(u,v) + \sqrt{\eps}M_2(u,v)\,.
	\end{align*}
		
	By the It\^{o} isometry and the boundedness of the derivatives $\nabla_{x_1}\psi$ and $\nabla_{x_2}\psi$, the martingale term can be bounded by
	\[
	\E\left[(M_i(u,v))^2\right] \leqslant C_i(v-u)\,, \quad 0<C_i<\infty\,.
	\]
	Hence
	\[
	\E\left[\left(\int_{u}^{v} h(s,X^{\epsilon}_{1,s},X_{2,s}^{\epsilon})ds\right)^2\right] \leqslant C\eps\,,
	\]
	with a generic constant $0<C<\infty$ that is independent of $u,\,v$ and $\eps$.
\end{proof}

\subsection{Convergence of the value function}\label{sec:value}

\begin{Lemma}
	Suppose that Condition LQ from page \pageref{assCC} holds. Then
	\[
	|V^{\epsilon}(t,x)-V(t,x_1)|\leq C \sqrt{\epsilon}\,,
	\]
	with $x=(x_1,x_2)\in D=D_s\times\R^{n_f}$,
	where $V^{\epsilon}$ is the solution of the original dynamic programming equation (\ref{hjb}) and $V$ is the solution of the limiting dynamic programming equation (\ref{rhjb}). The constant and $C$ depends on $x$ and $t$, but is finite on every compact subset of $D\times[0,T]$.
\end{Lemma}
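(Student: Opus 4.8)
The plan is to translate the deterministic PDE estimate into a probabilistic one via the uncoupled FBSDE representation. Since $V^\epsilon$ and $V$ are the classical solutions of (\ref{hjb}) and (\ref{rhjb}), the nonlinear Feynman--Kac identities give $V^\epsilon(t,x)=Y^\epsilon_t$ and $V(t,x_1)=\bar Y_t$, where $(X^\epsilon,Y^\epsilon,Z^\epsilon)$ and $(\bar X,\bar Y,\bar Z)$ solve the FBSDEs (\ref{fbsde}) and (\ref{rfbsde}) driven by the \emph{same} Brownian motion $W$. Because $Y^\epsilon_t$ and $\bar Y_t$ are deterministic, it suffices to prove the $L^2$ bound $\E\big[|Y^\epsilon_t-\bar Y_t|^2\big]\leqslant C\epsilon$, whence $|V^\epsilon(t,x)-V(t,x_1)|=|Y^\epsilon_t-\bar Y_t|\leqslant C\sqrt{\epsilon}$. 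Throughout I use that the terminal data vanish ($Q_1=0$, Assumption \ref{assBdd}), so that the terminal mismatch $Y^\epsilon_\tau-\bar Y_\tau=0$.

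I would first handle the forward part, which by uncoupledness can be studied independently of the backward equation. The obstruction is the singular $\epsilon^{-1/2}A_{12}X^\epsilon_{2,s}$ drift in the slow component $X^\epsilon_{1,s}$. Following the expansion of Section \ref{sec:derivation}, I would correct $X^\epsilon_{1,s}$ by the solution $\psi=-A_{12}A_{22}^{-1}x_2$ of the cell problem (\ref{cell}): applying It\^o to $X^\epsilon_{1,s}+\sqrt{\epsilon}\,\psi(X^\epsilon_{2,s})$ cancels the singular drift and replaces it by $\bar A X^\epsilon_{1,s}\,ds+\bar C\,dW_s$ up to an $O(\sqrt{\epsilon})$ remainder. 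Using the ergodicity of the fast dynamics (Assumption \ref{assErgode}, $A_{22}$ Hurwitz), which yields moment bounds on $X^\epsilon_{2,s}$ that are uniform in $\epsilon$, a Gronwall argument then gives $\sup_{s\in[t,\tau]}\E\big[|X^\epsilon_{1,s}-\bar X_s|^2\big]\leqslant C\epsilon$.

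Next I would transfer this to the backward equation. Setting $\delta Y_s=Y^\epsilon_s-\bar Y_s$ and $\delta Z_s=Z^\epsilon_s-\bar Z_s$, It\^o applied to $|\delta Y_s|^2$ and taking expectations yields $\E|\delta Y_t|^2+\E\!\int_t^\tau|\delta Z_s|^2ds=2\,\E\!\int_t^\tau \delta Y_s\big(f(X^\epsilon_s,Y^\epsilon_s,Z^\epsilon_s)-\bar f(\bar X_s,\bar Y_s,\bar Z_s)\big)ds$. I would split the driver difference by inserting $\bar f(X^\epsilon_{1,s},Y^\epsilon_s,Z^\epsilon_{1,s})$ into an \emph{averaging} part, $f(X^\epsilon_s,\cdot,\cdot)-\bar f(X^\epsilon_{1,s},\cdot,\cdot)$, and a \emph{stability} part, $\bar f(X^\epsilon_{1,s},\cdot,\cdot)-\bar f(\bar X_s,\bar Y_s,\bar Z_s)$. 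The averaging part is centered with respect to $\pi$ in the fast variable by the very definition (\ref{limitDriver1}) of $\bar f$, so its contribution is $O(\sqrt{\epsilon})$ by the Poisson-equation estimate of Lemma \ref{lemadriver}; the stability part is controlled by the (local) Lipschitz continuity of $\bar f$ together with the forward estimate of the previous step. After absorbing the $\delta Z_s$ terms with Young's inequality, Gronwall's lemma closes the bound $\E|\delta Y_t|^2\leqslant C\epsilon$.

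The hard part is the quadratic growth of $f$ in $z$ (Assumption \ref{assQuad}), which rules out the naive Lipschitz BSDE stability estimate, together with the fact that Lemma \ref{lemadriver} is stated for a deterministic integrand whereas the averaging term depends on the adapted solution processes $Y^\epsilon_s,Z^\epsilon_s$. I expect to overcome both by exploiting that, with bounded terminal data, $Y^\epsilon$ is bounded uniformly in $\epsilon$ and $\int Z^\epsilon\,dW$ is a BMO martingale of uniformly bounded norm (Kobylanski-type a priori estimates, cf.\ \cite{Kobylanski2000}); on the effective range of the solutions the driver is then Lipschitz, and the bounded derivatives of the Poisson solution allow the It\^o argument underlying Lemma \ref{lemadriver} to be carried out with the extra terms coming from $dY^\epsilon$ and $dZ^\epsilon$ contributing only at order $\sqrt{\epsilon}$. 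The remaining complications flagged at the start of the appendix — the unbounded fast domain $\R^{n_f}$ and the non-periodicity of the fast coefficients — are absorbed into the uniform ergodic moment bounds and the boundedness and smoothness of the Poisson solution.
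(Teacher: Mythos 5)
Your strategy is genuinely different from the paper's, and it contains a gap that is not cosmetic. You couple \emph{two} FBSDEs, (\ref{fbsde}) and (\ref{rfbsde}), through the same Brownian motion and compare their solutions, whereas the paper (following Briand--Hu \cite{BH99}) compares $Y^\epsilon_s$ with $V(s,X^\epsilon_{1,s})$, i.e.\ the limiting value function evaluated along the slow component of the \emph{same} forward process. The difference matters because of the stopping times: the horizon of (\ref{fbsde}) is $\tau^\epsilon=\min\{\tau_D,T\}$ with $\tau_D$ the exit time of $X^\epsilon_{1}$ from $D_s$, while (\ref{rfbsde}) is stopped at the exit time $\bar\tau$ of $\bar X$ from $D_s$ (capped at $T$). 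These are different random times, so your statement that ``the terminal mismatch $Y^\epsilon_\tau-\bar Y_\tau=0$'' is not meaningful: at the earlier of the two times one process has reached its (zero) boundary datum but the other generally has not, leaving a boundary term of size $|V(\tau^\epsilon,\bar X_{\tau^\epsilon})|$ (or $|V^\epsilon(\bar\tau,X^\epsilon_{\bar\tau})|$) that your It\^o/Gronwall computation on a single interval $[t,\tau]$ ignores. Controlling it requires stability of exit times under $O(\sqrt{\epsilon})$ perturbations of the paths together with boundary regularity of $V$, none of which is addressed. The paper's choice of comparison process makes this issue disappear: only one stopping time occurs, and $y^\epsilon_\tau=Y^\epsilon_\tau-V(\tau,X^\epsilon_{1,\tau})$ vanishes identically by the boundary/terminal conditions of (\ref{hjb}) and (\ref{rhjb}).

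The second gap is in your averaging step. Lemma \ref{lemadriver} requires a \emph{deterministic}, centered integrand of class $C_b^{1,2,2}$, but your averaging part $f(X^\epsilon_s,Y^\epsilon_s,Z^\epsilon_s)-\bar f(X^\epsilon_{1,s},Y^\epsilon_s,Z^\epsilon_{1,s})$ is evaluated at the adapted solution $(Y^\epsilon_s,Z^\epsilon_s)$, so the lemma does not apply; and the repair you sketch --- redoing the It\^o argument with ``extra terms coming from $dY^\epsilon$ and $dZ^\epsilon$'' --- is not available, since $Z^\epsilon_s=C^\top\nabla V^\epsilon(s,X^\epsilon_s)$ is not known to be a semimartingale (this would need $V^\epsilon\in C^{1,3}$ with uniform-in-$\epsilon$ control of third derivatives), and BMO bounds do nothing to restore the centering in $x_2$. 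The paper's decomposition is built precisely to avoid this: the Poisson-equation lemma is applied only to $G_1(s,x,0,0)=f\bigl(x,V(s,x_1),(\bar C^\top\nabla V(s,x_1),0)\bigr)-\bar f\bigl(x_1,V(s,x_1),\bar C^\top\nabla V(s,x_1)\bigr)$, a deterministic centered function of $(s,x_1,x_2)$; the dependence on $(y^\epsilon,z^\epsilon)$ is pushed entirely into the Lipschitz/Gronwall stability step, and the residual homogenization error appears as the extra driver $G_2^\epsilon=(L-\bar L)V$, handled by the functional central limit theorem rather than by a forward-process estimate. Your forward estimate itself is correct and a nice observation --- in this linear, additive-noise setting the corrector $X^\epsilon_{1,s}+\sqrt{\epsilon}\,\psi(X^\epsilon_{2,s})$ with $\psi$ from (\ref{cell}) cancels the singular drift \emph{exactly}, so $\sup_s\E|X^\epsilon_{1,s}-\bar X_s|^2\leqslant C\epsilon$ does hold --- but it is not needed in the paper's scheme, and repairing the two gaps above would in effect force you back to the paper's single-process comparison.
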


\begin{proof}
	The idea of the proof is to apply It{\^o}'s formula to  $|y^{\epsilon}_s|^2$, where $y^{\epsilon}_s=Y^{\epsilon}_s-V(s,X_{1,s}^{\epsilon})$ satisfies the backward SDE
	\begin{equation}
	\label{BSDEG}
	dy^{\epsilon}_s=-G^{\epsilon}(s,X_{1,s}^\epsilon,X_{2,s}^\epsilon, y^{\epsilon}_s, z^{\epsilon}_s ) ds +z^{\epsilon}_s \cdot dW_s
	\end{equation}
	where
	 \[
	 z^{\epsilon}_s=Z^{\epsilon}_s-\left(\bar{C}^\top \nabla V(s,X_{1,s}^\eps),\,0\right)^\top  \qquad (\nabla V = \nabla_{x_1}V)
	 \]
	 and
	 \[
	 G^{\epsilon}(t,x_1,x_2,y,z)  = G_1(t,x_1,x_2,y,z) + G^{\epsilon}_2(t,x_1,x_2,y,z)\,,
	 \]
	 with
	 \begin{align*}
	 	G_1  & = f(t,x,y+V(t,x_1),z+ (\bar{C}^\top \nabla V(t,x_1),0)) -\bar{f}(t,x_1,V(t,x_1), \bar{C}^\top  \nabla V(t,x_1))\\
		G_2^\eps & =   \left((A_{11}-\overline{A})x_1+\frac{1}{\eps}A_{12}x_2\right)\cdot\nabla V(t,x_1) +\frac{1}{2} (C_1^{}C_1^\top - \bar{C}\bar{C}^\top)\nabla^2 V(t,x_1)\,.
	 \end{align*}
	We set $X_s^\eps=X_{\tau_D}^\eps$ for $s\in(\tau_D,T]$ when $\tau_D<T$. Then, by construction, $G_1(t,x,0,0)$, $x=(x_1,x_2)\in D_s\times\R^{n_f}$ is centered \wrt $\pi$ and bounded (since the running cost is independent of $x_2$), therefore Lemma \ref{lemadriver} implies that
	\begin{equation}\label{G1}	\sup_{t\in[0,T]}\E\left[\left(\int_t^T G_1(s,X_{1,s}^\epsilon,X_{2,s}^\epsilon,0,0)ds\right)^2\right] \leqslant C_1 \epsilon\,,
	\end{equation}
	The second contribution to the driver can be recast as
	$G_2^\eps = (L-\bar{L})V$, with $L_2$ and $\bar{L}$ as given by (\ref{L}) and (\ref{Lbar}) and thus, as $\eps\to 0$, 	
	\begin{equation}\label{G2}
	\sup_{t\in[0,T]}\E\left[\left(\int_t^T G_2^\eps(s,X_{1,s}^\epsilon,X_{2,s}^\epsilon,0,0) ds \right)^2\right] \leqslant C_2 \epsilon
	\end{equation}
	by the functional central limit theorem for diffusions with Lipschitz coefficients \cite{FW12}; cf.~also Sec.~\ref{sec:derivation}. As a consequence of (\ref{G1}) and (\ref{G2}), we have $G^\eps\to 0$ in $L^2$, which, since $\E[|y^{\epsilon}_T|^2] \leqslant C_3 \epsilon$, implies strong convergence of the solution of the corresponding backward SDE in $L^2$.
	
	Specifically, since $\nabla V$ is bounded $\bar{D}_s$, It{\^o}'s formula applied to $|y^{\epsilon}_s|^2$, yields after an application of Gronwall's Lemma:
	\begin{align*}
	\E\left[\sup_{t\le s\le T} |y^{\epsilon}_s|^2+\int_t^{T} |z^{\epsilon}_s|^2\,ds\right]\leqslant & \ell_{D} \E\left[\left(\int_t^{T} G^\eps(s,X_{1,s}^\epsilon,X_{2,s}^\epsilon,0,0) ds\right)^2\right] + \ell_{D} \E[|y^{\epsilon}_{T}|^2]
	\end{align*}
	where the Lipschitz constant $\ell_D$ is independent of $\epsilon$ and finite for every compact subset $\bar{D}_s\subset\R^{n_s}$ by the boundedness of $\nabla V$ (since $V$ is a classical solution and $D_s$ in bounded). Hence $\E[|y^\eps_s|^2]\le C_3\eps$ uniformly for $s\in[t,T]$, and by setting $s=t$, we obtain
	\[
	|Y_t^\eps| = |V^\eps(t,x) - V(t,x_1)| \le C\sqrt{\eps}
	\]
	for a constant $C\in(0,\infty)$.
\end{proof}
This proves Theorem \ref{thm:value}.

\subsubsection*{Acknowledgements}
This research has been partially funded by Deutsche Forschungsgemeinschaft (DFG) through the grant CRC 1114 "Scaling Cascades
in Complex Systems", Project A05 "Probing scales in equilibrated systems by
optimal nonequilibrium forcing". Omar Kebiri acknowledges funding from the EU-METALIC II Programme.


\begin{thebibliography}{99}

\bibitem{LuiAnderson1989} B. D. O. Anderson and Y. Liu. Controller reduction: concepts and approaches. IEEE Trans. Autom. Control 34, 802--812 (1989).
\bibitem{A93} F. Antonelli, \textit{Backward-forward stochastic differential equations}. Ann. Appl. Probab. 3 (1993),
no. 3, 777-793.


\bibitem{Antbook}
A.C. Antoulas, {\em Approximation of large-scale dynamical
  systems}, Advances in design and control, Society for Industrial and Applied
  Mathematics (2005).


\bibitem{Asplund2011} E. Asplund and T. Kl\"uner, {\em Optimal control of open quantum systems applied to the photochemistry
  of surfaces}, Phys. Rev. Lett. 106, 140404 (2011).



\bibitem {bgm} K. Bahlali , B. Gherbal , B. Mezerdi, \textit{Existence of optimal controls for systems driven by FBSDEs}, Syst. Control Letters 60, 344-349 (1995).

\bibitem{BKKM17} K. Bahlali, O. Kebiri, N. Khelfallah and H. Moussaoui \textit{One dimensional BSDEs with logarithmic growth application to PDEs}. Stochastics, 1744-2516 (2017).

\bibitem {BKM} K. Bahlali, O. Kebiri, A. Mtiraoui: \textit{Existence of an optimal Control for a system driven by a degenerate coupled Forward-Backward Stochastic Differential Equations}, C. R. Acad. Sci. Paris, Ser. I (2016).

\bibitem{B97} V. Bally Approximation scheme for solutions of BSDE. In: El Karoui, N., Mazliak, L. (eds.) Backward Stochastic Differential Equations, Addison Wesley Longman (1997), 177-191.

\bibitem {BenderSteiner2012} C. Bender, J. Steiner: \textit{Least-Squares Monte Carlo for BSDEs}. In: Carmona et al. (Eds.), Numerical Methods in Finance, Springer, (2012) 257-289.


\bibitem{Baur2014}
U. Baur, P. Benner, and L. Feng, {\em Model order reduction for
  linear and nonlinear systems: A system-theoretic perspective}, Arch. Comput.
  Meth. Eng. 21, 331--358 (2014).


\bibitem{BBM86} A. Bensoussan, L. Boccardo, F. Murat, \textit{Homogenization of elliptic equations with principal part not in divergence form and hamiltonian with quadratic growth}, Commun. Pure Appl. Math. 39 (1986) 769-805.

\bibitem{BLP78} A. Bensoussan, J. L. Lions, G. Papanicolaou, \textit{Asymptotic Analysis for Periodic Structures}, North-Holland,Amsterdam, (1978) 769-805.

\bibitem{Blankenship1987} A. Bensoussan and G. Blankenship, {\em Singular perturbations in stochastic control},
 in: \emph{Singular Perturbations and Asymptotic Analysis in Control
  Systems} (eds. P. V. Kokotovic, A. Bensoussan, and G. L. Blankenship), vol.~90
  of Lecture Notes in Control and Information Sciences, Springer Berlin Heidelberg, pp.~171--260 (1987).


\bibitem{BKT05} B. Bouchard, N. E. Karoui and N. Touzi, \textit{Maturity randomization for stochastic control problems}, Ann. Appl. Probab. (2005), Vol. 15, No. 4, 2575-2605
 \bibitem{BET09} B. Bouchard, R. Elie, N. Touzi, \textit{Discrete-time approximation of BSDEs and probabilistic schemes for fully nonlinear PDEs}, Advanced financial modelling, Radon Ser. Comput. Appl. Math., 8, Walter de Gruyter, Berlin, (2009) 91-124.
\bibitem {BH99} P. Briand, Y. Hu, \textit{Probabilistic approach to singular perturbations of semilinear and quasilinear parabolic} , Nonlinear Analysis 35, 815-831 (1999). 


\bibitem{Buckdahn1998} R. Buckdahn and Y. Hu, {\em Probabilistic approach to homogenizations of systems of quasilinear
  parabolic {PDEs} with periodic structures}, Nonlinear Analysis 32, 609 -- 619 (1998).


\bibitem {BD00} A. Budhiraja and P. Dupuis, \textit{A variational representation for positive functionals of infinite dimensional Brownian motion}, Probab. Math. Statist., 20 (2000), pp. 39-61.
\bibitem{C97} D. Chevance, Numerical methods for backward stochastic differential equations, Numerical methods in finance, Publ. Newton Inst., Cambridge Univ. Press, Cambridge (1997) 232-244.

\bibitem {DMR96} P. Dai Pra., L. Meneghini, and W. J. Runggaldier, \textit{Connections between stochastic control and dynamic games}, Math. Control Signals Systems, 9 (1996), pp. 303-326.

\bibitem{DN90} M. H. Davis and A. R. Norman, \textit{Portfolio selection with transaction costs}, Math. Oper. Res., 15 (1990), pp. 676-713
\bibitem{DE92} D. Duffie and L. G. Epstein, \textit{Stochastic differential utility}. Econometrica, 60(2), (1992)



\bibitem{is_multiscale}
P. Dupuis, K. Spiliopoulos and H. Wang, {\em Importance sampling for multiscale diffusions}, Multiscale Model. Simul. 10, 1--27, (2012).


\bibitem{EPQ97} N. El Karoui, S. Peng, and M. C. Quenez. \textit{Backward stochastic differential equations in finance.}  Mathematical Finance, 1, 1-71 (1997).


\bibitem{Evans1989} L. C. Evans, {\em The perturbed test function method for viscosity solutions of
  nonlinear {PDE}}, P. Roy. Soc. Edinb. A 111, 359--375 (1989).


\bibitem{fleming2005} W. H. Fleming, \textit{Optimal investment models with minimum consumption criteria}, Australian Economic Papers 44, 307-321 (2005).

\bibitem {FM06} W. H. Fleming and H. Mete Soner \textit{Controlled Markov processes and viscosity solutions}. Applications of mathematics. Springer, New York, 2nd edition, (2006).


\bibitem{Franzke05} C. Franzke and A.J. Majda, and E. Vanden-Eijnden, {\em  Low-order stochastic mode
reduction for a realistic barotropic model climate.} J. Atmos. Sci. 62, 1722--1745 (2005).

\bibitem {FW12} M. Freidlin and A. Wentzell, \textit{Random Perturbations of Dynamical Systems}, vol. 260 of Grundlehren der mathematischen Wissenschaften, Springer Berlin Heidelberg, 2012.

\bibitem{GobetEtal2017}
E.~Gobet and P.~Turkedjiev, \textit{Adaptive importance sampling in least-squares Monte Carlo algorithms for backward stochastic differential equations}, Stoch. Proc. Appl. 127, 1171-1203 (2005). 



\bibitem{H11} C. Hartmann, \textit{Balanced model reduction of partially-observed Langevin equations}: an averaging principle, Math. Comput. Model. Dyn. Syst. 17, 463-490, (2011). 

\bibitem{ZLPH14}  C. Hartmann, J. Latorre, G. A. Pavliotis, and  W. Zhang, \textit{Optimal control of multiscale systems using reduced-order models,} J. Computational Dynamics 1, 279-306 (2014)

\bibitem{HSZ13} C. Hartmann, B. Schäfer-Bung and A. Zueva \textit{Balanced averaging of bilinear systems with applications to stochastic control} SIAM J. Control Optim. 51, 2356-2378, (2013). 

\bibitem {HS12}  C. Hartmann and C. Sch\"{u}tte, \textit{Efficient rare event simulation by optimal nonequilibrium forcing}, J. Stat. Mech. Theor. Exp. 2012, P11004 (2012).

\bibitem{HEtal17} 
 C. Hartmann, C. Sch\"utte, M. Weber, and W. Zhang {\em Importance sampling in path space for diffusion processes with slow-fast variables,} Probab. Theory Relat. Fields 170, 177--228 (2017).

    \bibitem{HIM05} Y. Hu, P. Imkeller, and M. M\"uller. {\em Utility maximization in incomplete markets}, Ann. Appl. Probab. 15, 1691--1712 (2005).

\bibitem {HP97} Y. Hu, S. Peng, \textit{A stability theorem of backward stochastic differential equations and its application}, C. R. Acad. Sci. Paris, Ser. I Math. 324, 1059--1064 (1997.
   \bibitem{HNM17} C. B. Hyndman, P. O. Ngou, \textit{A Convolution Method for Numerical Solution of Backward Stochastic Differential Equations} Methodol. Comput. Appl. Probab. 19,   1--29 (2017).
   
   
\bibitem{Ichihara2005} N. Ichihara, {\em A stochastic representation for fully nonlinear {PDE}s and its
  application to homogenization}, J. Math. Sci. Univ. Tokyo 12, 467--492 (2005).

\bibitem{IHP} O. Kebiri, L. Neureither, and C. Hartmann, \textit{Adaptive importance sampling with forward-backward stochastic differential equations}, Submitted (2018) . 

\bibitem {K63} R. Khasminskii, \textit{Principle of averaging for parabolic and elliptic differential equations and for Markov processes with small diffusion}, Theory Probab. Appl. 8, 1-21 (1963).


\bibitem{Kabanov2003} Y. Kabanov and S. Pergamenshchikov, \emph{Two-scale stochastic systems: asymptotic analysis and control}, Springer, Berlin, Heidelberg, Paris (2003).

\bibitem{Kokotovic1984} P. V. Kokotovic, {\em Applications of singular perturbation techniques to control problems}, SIAM Review 26, 501--550 (1984).



\bibitem {Kobylanski2000} M. Kobylanski, {\em Backward stochastic differential equations and partial differential equations with quadratic growth}, Ann. Probab. 28, 558--602 (2000).


\bibitem{kushner1990} H. J. Kushner, \emph{Weak Convergence Methods and Singularly Perturbed Stochastic
  Control and Filtering Problems}, Birkh\"auser, Boston (1990).
  
  \bibitem{Kurtz2001} T. Kurtz and R. H. Stockbridge, {\em Stationary solutions and forward equations for controlled and
  singular martingale problems}, Electron. J. Probab 6, 5 (2001).

	
\bibitem{MPY94} J. Ma, P. Protter, and J. Yong. \textit{Solving Forward-Backward Stochastic Differential Equations Explicitly-a Four Step Scheme} Probability Theory and Related Fields, 98, 339-359 (1994).
	
	
\bibitem{Lall2002}
S. Lall, J. Marsden, and S. Glava\v{s}ki, {\em A subspace approach to balanced truncation for model reduction of
  nonlinear control systems}, Int. J. Robust Nonlinear Control 12, 519--535 (2002).

\bibitem{bilinear} P. M. Pardalos and V. A. Yatsenko, {\em Optimization and Control of Bilinear Systems: Theory, Algorithms, and Applications}, Springer US, 2010. 


\bibitem{PP92} E. Pardoux, S. Peng, \textit{Backward stochastic differential equations and quasilinear parabolic partial differential equations}, in: B.L. Rozovskii, R.B. Sowers (Eds.), Stochastic Partial Differential Equations
and their Applications, Lecture Notes in Control and Information Sciences 176, Springer, Berlin, (1992).

\bibitem{Veretennikov3} E. Pardoux and A. Yu. Veretennikov: \textit{On the poisson equation and diffusion approximation 3}, The Annals of Probability, Vol. 33, No. 3, 1111--1133 (2005).
\bibitem{PP90} E. Pardoux and S. Peng. \textit{Adapted solution of a backward stochastic differential equation}. System Control Letters, 14(1):55-61, (1990).
\bibitem {PS08} G. A. Pavliotis and A. M. Stuart,\textit{ Multiscale Methods: Averaging and Homogenization}, Springer, (2008).

\bibitem {peng93} S. Peng, \textit{Backward Stochastic Differential Equations and Applications to Optimal Control}, Appl. Math. Optim. 27 (1993), pp.125-144.

\bibitem{P09} H. Pham, \textit{Continuous-time stochastic control and optimization with financial applications}, Stochastic modelling and applied probability, Springer, Berlin, Heidelberg, (2009).

\bibitem{Steinbrecher2010} A. Steinbrecher, {\em Optimal control of robot guided laser material treatment}, in: \emph{Progress in Industrial Mathematics at ECMI 2008} (eds. A.~D.
  Fitt, J. Norbury, H. Ockendon, and E. Wilson),Springer Berlin Heidelberg. pp., 501--511 (2010). 
  

\bibitem {RS94} F. Robert  Stengel. \textit{Optimal control and estimation. Dover books on advanced mathematics}. Dover Publications, New York, (1994).

\bibitem {SWH12}  C. Sch\"{u}tte, S. Winkelmann, and C. Hartmann  , \textit{Optimal control of molecular dynamics using markov state models}, Math. Program. Ser. B 134, 259-282 (2012).

\bibitem {to} N. Touzi \textit{Optimal stochastic control, stochastic target problem, and backward differential equation}, Springer-Verlag (2013).

\bibitem{TurkedjievDissertation2013} P. Turkedjiev: \textit{Numerical methods for backward stochastic differential equations of quadratic and locally Lipschitz type}, Dissertation, Humboldt-Universität zu Berlin, Mathematisch-Naturwissenschaftliche Fakultät II (2013).


\bibitem{WangDupuis2004}
P. Dupuis and H. Wang, {\em Importance sampling, large deviations, and differential games}, Stochastics and Stochastic Reports 76, 481--508 (2004),

\bibitem{ZWHWS14} W. Zhang, H. Wang, C. Hartmann, M. Weber and C. Sch\"{u}tte, \textit{Applications of the cross-entropy method to importance sampling and optimal control of diffusions}, SIAM J. Sci. Comput. 36, A2654-A2672, (2014)

\bibitem {Z05} W. Zhen, \textit{Forward-backward stochastic differential equations, linear quadratic stochastic optimal control and nonzero sum differential games}, Journal of Systems Science and Complexity (2005)





\end{thebibliography}
\end{document}